\date{\today}
\def\1{{\bf 1}}
\def\deg{\text{deg}\,}
\def\w{\wedge}
\def\dbar{\bar\partial}
\def\R{{\mathbb R}}
\def\C{{\mathbb C}}
\def\w{{\wedge}}
\def\S{{\mathcal S}}
\def\Hom{{\rm Hom\, }}
\def\rank{{\rm rank\, }}
\def\E{{\mathcal E}}
\def\O{{\mathcal O}}
\def\U{{\mathcal U}}
\def\J{{\mathcal J}}
\def\nbh{neighborhood }
\def\be{\begin{equation}}
\def\ee{\end{equation}}
\def\Ok{\mathcal O}
\def\V{{\mathcal V}}
\newtheorem{thm}{Theorem}[section]
\newtheorem{lma}[thm]{Lemma}
\newtheorem{prop}[thm]{Proposition}
\theoremstyle{definition}
\newtheorem{df}[thm]{Definition}
\theoremstyle{remark}
\newtheorem{preremark}[thm]{Remark}
\newtheorem{preex}[thm]{Example}
\newenvironment{remark}{\begin{preremark}}{\qed\end{preremark}}
\newenvironment{ex}{\begin{preex}}{\qed\end{preex}}
\numberwithin{equation}{section}
\title[]{The flatness of the $\Ok$-module of smooth functions and integral representation}
\begin{document}

\date{\today}

\author[Mats Andersson]{Mats Andersson}

\address{Department of Mathematics\\Chalmers University of Technology and the University of
Gothenburg\\S-412 96 G\"OTEBORG\\SWEDEN}

\email{matsa@chalmers.se}

\subjclass{}

\thanks{The author  was
  partially supported by the Swedish
  Research Council}

\begin{abstract}
We give a proof of the well-known fact that the $\Ok$-module $\E$ of smooth functions is flat
by means of residue theory and integral formulas.  A variant of the proof gives  
a related statement for
classes of functions of lower regularity. We also prove a Brian\c con-Skoda type theorem for
ideals of the form $\E a$, where $a$ is an ideal in $\Ok$. 
\end{abstract}


\maketitle

\section{Introduction}
Let $X$ be a complex manifold of dimension $n$ with structure sheaf $\Ok$ of holomorphic functions, 
and let $\E$ be the analytic sheaf of smooth functions. It is well-known, and first proved by 
Malgrange  already in the '60s,  \cite{Mal}, that $\E$ is a flat $\Ok$-module; that is, an exact sequence
of $\Ok$-modules remains exact when tensored by $\E$.  It is enough to prove,  see Section~\ref{flat}, 
noting that  $\E^{\oplus m}=\E\otimes_\Ok \Ok^{\oplus m}$:
If 
\begin{equation}\label{bas}
\Ok^{\oplus m_2}\stackrel{f_2}{\to}\Ok^{\oplus m_1}\stackrel{f_1}{\to}\Ok^{\oplus m_0}
\end{equation}
is exact, then the induced sequence
\begin{equation}\label{bas1}
\E^{\oplus m_2}\stackrel{f_2}{\to}\E^{\oplus m_1}\stackrel{f_1}{\to}\E^{\oplus m_0}
\end{equation}
is exact, that is, locally there is a smooth solution to $f_2 \psi=\phi$ for each smooth $\phi$ such that 
$f_1\phi=0$.  It is in fact enough to check the case when $m_0=1$. 



Our first aim of this note is to give a proof based on residue theory and an integral formula
that provides the desired smooth solution $\psi$.
The idea to use integral formulas to
find holomorphic solutions to this kind of equations, often referred to as division problems, 
was introduced by Berndtsson
in \cite{Be}. It was further developed and adapted for a variety of situtions, see, e.g.,  
\cite{Pa, BP, BY, BGVY,MH, Maz, RSW, AC, Aco, A3, AW1, ASS, AG}. 

%

\smallskip
One should notice that for instance the $\Ok$-module $C$ of continuous functions is not flat. 
Let us consider the simple exact
sequence 
\begin{equation}\label{enkel1}
\Ok\stackrel{f_2}{\to}\Ok^{\oplus 2}\stackrel{f_1}{\to}\Ok,
\end{equation}
in a \nbh of the origin in $\C^2_{x_1,x_2}$, where $f_1=(x_1,x_2)$ and $f_2=(-x_2, x_1)^t$.  
Observe that $C^{\oplus m}=C\otimes_\Ok \Ok^{\oplus m}$.
The induced sequence $C\to C^{\oplus2}\to C$ is not exact.  For instance,  
$\phi=(-x_2 \ x_1)^t/|x|^{1/3}$ is continuous and $f_1\phi=0$ but there is no continuous 
function $\psi$ such that $f_2\psi=\phi$;  in fact, since $f_2$ is pointwise injective outside the origin the
only possible solution is $\psi=1/|x|^{1/3}$.  However, if  $\phi$ is in $C^1$ and $f_1\phi=0$,
then there is indeed a continuous solution to $f_2\psi=\phi$, cf.~Example~\ref{plot} below.

\smallskip
Following \cite{AW1} one can associate a certain current $U$ with \eqref{bas}, see Section~\ref{posa}.

\begin{thm}\label{main}
Assume that \eqref{bas} is exact in a \nbh of $0\in \C^n$ 
and let $M$ be the order of the associated current $U$.
 
\smallskip

\noindent (i) If $\phi$ is in $\E^{\oplus m_1}$ and $f_1\phi=0$, then there is $\psi$ in
$\E^{\oplus m_{2}}$ such that $f_2\psi=\phi$. 

\smallskip
\noindent (ii) If  $\phi$ in $C^{k+2M+c_n}\otimes\Ok^{\oplus m_1}$ and 
$f_1\phi=0$, then there is  $\psi$ in $C^{k}\otimes\Ok^{\oplus m_{2}}$ such that $f_2\psi=\phi$.
Here $c_n$ is a constant that only depends on the dimension $n$.
 \end{thm}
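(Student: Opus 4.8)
The plan is to reduce the statement to a local division formula of Berndtsson--Andersson type, built from a free resolution and the associated residue currents. First I would extend \eqref{bas}, in a \nbh of $0$, to a free resolution $(\Ok^{\oplus m_\bullet},f_\bullet)$ of $\Ok^{\oplus m_0}/\Im f_1$; this is possible since $\Ok_0$ is regular, and the hypothesis that \eqref{bas} is exact is precisely exactness of the complex at $\Ok^{\oplus m_1}$. Following \cite{AW1} I would equip the bundles $E_k=\Ok^{\oplus m_k}$ with Hermitian metrics and form the associated currents $U=\sum_k U_k$ and $R=\sum_k R_k$, which are $\Hom(E_0,E_k)$-valued and satisfy $\nabla U=I-R$ with $\nabla=f-\dbar$; here $U$ is the principal-value--type current whose order is, by definition, the number $M$, and $R$ is supported on the set where the complex fails to be pointwise exact. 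I would also construct the holomorphic Hefer forms $H=\sum H^\ell_k$ solving the Hefer identities that relate $f_k(\zeta)$ to $f_\ell(z)$.

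Next I would write down the weighted Koppelman--Hefer representation on a small ball $D\ni 0$. Combining the Bochner--Martinelli kernel $B(z,\zeta)$ with a weight $g(z,\zeta)$ that is holomorphic in $z$, equal to $I$ on the diagonal, and with the Hefer forms and the potential $U$, one obtains a homotopy operator $\K$ whose kernel is assembled from $H$, $g$, $B$ and $U$, and an identity of the schematic form
\[
\phi(z)=f_2(z)\,\K\phi(z)+\K(f_1\phi)(z)+\int_\zeta H\wedge R\,\phi-\dbar_z\K\phi(z)-\K(\dbar\phi)(z),
\]
where $\K\phi$ is a section of $E_2$, so that $f_2(z)\K\phi(z)$ is again a section of $E_1$. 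The candidate solution is $\psi:=\K\phi$. The term $\K(f_1\phi)$ drops out because $f_1\phi=0$ by hypothesis.

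The residue term $\int H\wedge R\,\phi$ is governed by the current $R$, and it is exactness of \eqref{bas} at $\Ok^{\oplus m_1}$ that makes it disappear: this is the point where the module-theoretic hypothesis is genuinely used, and verifying that it contributes nothing even for smooth (not holomorphic) $\phi$ with $f_1\phi=0$ is one of the things to check carefully. The two remaining corrections $-\dbar_z\K\phi$ and $-\K(\dbar\phi)$ involve only $\dbar$; I would absorb them by combining $\K$ with a $\dbar$-homotopy drawn from the same family of kernels, so that the net identity is $f_2\psi=\phi$. For part (i), where $\phi\in\E^{\oplus m_1}$, the kernels act smoothly, one obtains $\psi\in\E^{\oplus m_2}$ directly, and a partition of unity globalizes the local solution. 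For part (ii) I would run the identical construction while tracking regularity.

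The main obstacle is precisely this regularity bookkeeping in (ii). The kernel of $\K$ has two sources of singularity along the diagonal: the current $U$, whose order is $M$ and which can only be paired with sufficiently differentiable data, and the Bochner--Martinelli factor, whose singularity in $\C^n$ costs a fixed, dimension-dependent number $c_n$ of derivatives. One must show that $\K$ maps $C^{k+2M+c_n}\otimes\Ok^{\oplus m_1}$ into $C^{k}\otimes\Ok^{\oplus m_2}$: the loss $2M$ reflects that the order-$M$ current enters twice, once so that the pairing against $\phi$ is well defined and once more because differentiating the representation $k$ times and estimating through the singular potential consumes a further $M$ derivatives, while $c_n$ is the standard dimensional loss of a Cauchy--type integral. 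Establishing these mapping properties --- estimating the derivatives of $\K\phi$ in terms of those of $\phi$ across the singular kernel --- is the technical heart of the argument, the algebraic identity above being comparatively formal.
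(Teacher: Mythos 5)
Your setup --- extending \eqref{bas} to a free resolution, introducing the currents $U$ and $R$ with $\nabla U=I-R$, the Hefer forms, and the use of exactness to kill the residue contribution (since $R^\ell=0$ for $\ell\ge1$ by \cite{AW1}) --- agrees with the paper. But there is a genuine gap at the decisive point, namely the two correction terms $-\dbar_z\K\phi-\K(\dbar\phi)$ that appear precisely because $\phi$ is not holomorphic. You propose to ``absorb'' them by combining $\K$ with a $\dbar$-homotopy from the same family of kernels so that the net identity becomes $f_2\psi=\phi$, but no such absorption is available: these terms are not in the image of $f_2$ in any visible way, and solving auxiliary $\dbar$-equations cannot make them so without losing control of the division --- the entire difficulty of the flatness statement is concentrated exactly there. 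Indeed, the example \eqref{enkel1} in the introduction shows that for data of finite regularity the division loses derivatives, so the error terms cannot be formally negligible; any mechanism that removes them must exploit \emph{all} derivatives of $\phi$, which your scheme never does. There is also an analytic defect: your kernel pairs the currents $U$ and $R$, which are singular on $Z$, against $\dbar\phi$ times the Bochner--Martinelli factor, which is singular at $\zeta=z$; for $z\in Z$ these singular supports meet and the product is simply not defined without further argument.

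The paper resolves both problems with a device absent from your proposal: an almost holomorphic extension. One passes to $\C^{2n}_{\zeta,\omega}$, embeds $\Omega$ as $\{\omega=\bar\zeta\}$, and replaces $\phi$ by a Borel-type sum $\tilde\phi(\zeta,\omega)=\sum_{\alpha}(\partial^\alpha_{\bar\zeta}\phi)(\zeta)\,(\omega-\bar\zeta)^\alpha\chi(\lambda_{|\alpha|}(\omega-\bar\zeta))/\alpha!$ satisfying $\dbar\tilde\phi=\O(|\omega-\bar\zeta|^\infty)$. The form $\Phi^z=\tilde\phi-\dbar\tilde\phi\w v^z$ is then \emph{smooth}, because the infinite-order vanishing of $\dbar\tilde\phi$ on $\{\omega=\bar\zeta\}$ kills the Bochner--Martinelli singularity at $(z,\bar z)$; it is $\nabla$-closed, and $f_\ell\phi=0$ forces $f_\ell\Phi^z=0$. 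Consequently the purely holomorphic division formula \eqref{pokemon2} applies verbatim, with no $\dbar$-error terms whatsoever, and $\psi=T_1\phi$ is the desired smooth solution. Part (ii) is obtained by truncating the Borel sum at order $c_n+M+k$: one $M$ is spent so that the Taylor coefficients remain $C^M$ (needed to pair with the order-$M$ current), and the vanishing order $c_n+M+k$ of $\dbar\tilde\phi$ pays for the Bochner--Martinelli singularity and the $k$ derivatives in $z$. Your heuristic count of $2M+c_n$ is not anchored to any construction, and without the extension trick, or a genuine substitute for it, your argument does not go through.
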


In view of the discussion above,  (i) immediately implies that $\E$ is a flat $\Ok$-module. 

\smallskip
The proof of Theorem~\ref{main} relies on of the construction of  
weighted integral representation formulas in \cite{A3},  the
residue currents associated with free resolutions in \cite[Section~5]{AW1}, and
the special choice of weight in \cite{Aco}.

\smallskip

The classical  Brian\c con-Skoda
theorem, first proved in \cite{BS},  states that if 
$a\subset \Ok$ is an ideal and $\phi\in\Ok$ is a function
such that 
\begin{equation}\label{pottaska}
|\phi|\le C|a|^{\mu+r-1},
\end{equation}
 where
 \begin{equation}
\mu:=\min(m,n)
\end{equation}
and $m$ is the minimal number of generators for $a$,
then $\phi$ is in $a^r$.  Here $|a|=\sum|a_j|$  for a (finite) set $a_j$ of generators. 
Any other set of generators gives rise to a comparable quantity. 
By the Nullstellensatz $\phi$ is in the ideal $a^r$ if it vanishes to a large enough order
at the zero set of $a$. The important point is that the condition \eqref{pottaska} 
is uniform in both $a$ and $r$
(if we replace $\mu$ by $n$). 

There is a purely algebraic formulation in terms of integral closures,
see, e.g., \cite{BS}, and there are general versions for Noetherian,
even non-regular, rings, \cite{Hun}. 
We do not know if there is some kind of algebraic analogue for a non-Noetherian ring like $\E$. 
However, we have the following analytic variant for ideals $\E a\subset \E$,
where $a$ are ideals in $\Ok$.  Notice that $(\E a)^r=\E a^r$.

\begin{thm}\label{main3}
Assume that $\phi$ is a germ of a smooth function at $0$,
$a\subset \Ok$ is an ideal and $r$ is a positive integer. If there are constants $C_\alpha$ such that 
\begin{equation}\label{pontus}
|\partial^{\alpha}_{\bar z}\phi| \le C_\alpha |a|^{\mu+r-1}
\end{equation}
for all multi-indices $\alpha\ge 0$, then $\phi$ is in $\E a^r$.
\end{thm}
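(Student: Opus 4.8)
The plan is to run the same machinery as in Theorem~\ref{main}: fix a minimal set of generators $a=(a_1,\dots,a_m)$ of the ideal $a$, form the residue current $R$ associated (as in \cite[Section~5]{AW1}) with a free resolution of $\Ok/a^r$, and feed the smooth germ $\phi$ into the weighted Koppelman formula of \cite{A3} with the special weight of \cite{Aco} that encodes division by $a^r$. First I would record the \emph{holomorphic} input that drives the argument: by the residue-theoretic proof of the classical Brian\c con--Skoda theorem \cite{BS}, the current $R$ has the duality property $hR=0\iff h\in a^r$ for $h\in\Ok$, and moreover the estimate $|h|\le C|a|^{\mu+r-1}$ already forces $hR=0$ (this is where the exponent $\mu+r-1$ enters, through the $|a|^{2\lambda}$-regularization). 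The goal is to promote this annihilation from holomorphic $h$ to smooth $\phi$, at the cost of controlling the antiholomorphic derivatives.

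Next I would write down the integral representation. Applying the weighted Koppelman formula to $\phi$ produces a decomposition
\begin{equation}
\phi(z)=\phi_0(z)+\phi_1(z),
\end{equation}
where the main term $\phi_0$ is, by the construction of the weight, an explicit smooth combination of the monomials $a^\beta$ with $|\beta|=r$, and hence manifestly lies in $\E a^r$, while the remainder $\phi_1$ is the integral of the residue current $R(\zeta,z)$ against $\phi$, contracted with the relevant Hefer forms. Since $R$ is supported on the variety $Z=V(a)$, the term $\phi_1$ depends only on the jet of $\phi$ transverse to $Z$; and because $R$ is a current of bidegree $(0,*)$ — it involves only $d\bar\zeta$, and holomorphic functions act on it by ordinary multiplication — this jet enters solely through the antiholomorphic derivatives $\partial_{\bar z}^\alpha\phi$. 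This is precisely why the hypothesis \eqref{pontus} is imposed on $\bar z$-derivatives alone.

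The heart of the proof is then to show $\phi_1=0$. I would expand $\phi$ in its Taylor series in $\bar\zeta$ along $Z$ and, using that $R$ is pseudomeromorphic and supported on $Z$, reduce the pairing with $R$ to finitely many terms, each of the shape $(\partial_{\bar z}^\alpha\phi)\,R$. For every such term the bound $|\partial_{\bar z}^\alpha\phi|\le C_\alpha|a|^{\mu+r-1}$ is exactly the Brian\c con--Skoda estimate that yields $(\partial_{\bar z}^\alpha\phi)R=0$, by the same regularization computation that gives the annihilation in the holomorphic case. Hence each term vanishes, so $\phi_1=0$ and $\phi=\phi_0\in\E a^r$.

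I expect the main obstacle to be the step $\phi_1=0$: for a merely smooth $\phi$ the product of $R$ with $\phi$ is genuinely a current-theoretic operation rather than plain multiplication, and one must justify both that the pairing reduces to finitely many antiholomorphic Taylor coefficients and that the uniform estimates \eqref{pontus} survive this reduction term by term. Controlling how many derivatives actually occur — a number governed by the order $M$ of the associated current, just as in Theorem~\ref{main} — and verifying that each reduced term still meets the Brian\c con--Skoda threshold $\mu+r-1$ is where the real work lies.
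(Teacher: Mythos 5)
Your overall architecture does match the paper's: an integral representation splits $\phi$ into a term visibly in $\E a^r$ plus a term built from the residue current, and everything reduces to showing that the smooth functions $\partial_{\bar z}^\alpha\phi$ annihilate $R$ (this reduction is exactly the paper's Lemma~\ref{kork}, and your observation that only antiholomorphic derivatives enter is correct). But the step you defer to the end --- deducing $(\partial_{\bar z}^\alpha\phi)R=0$ from \eqref{pontus} --- is the actual content of the theorem, and the route you sketch for it does not work. You invoke ``the same regularization computation that gives the annihilation in the holomorphic case,'' i.e.\ the classical Brian\c con--Skoda mechanism. That mechanism rests on the fact that for a \emph{holomorphic} $h$, the estimate $|h|\le C|a_0|^{\mu+r-1}$ on a modification forces genuine divisibility, $h=a_0^{\mu+r-1}h'$ with $h'$ holomorphic, whence $h\,\dbar(1/a_0^{\mu+r-1})=0$; likewise the duality $hR=0\iff h\in a^r$ is a statement about holomorphic functions only. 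For a merely smooth multiplier $\xi=\partial_{\bar z}^\alpha\phi$ no division is available, so neither the holomorphic Brian\c con--Skoda theorem nor duality can be cited to kill $\xi R$. This is precisely the point where smoothness changes the problem, and your proposal leaves it unproved (you flag it yourself as ``where the real work lies'').

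The paper closes this gap with an argument you would need to supply: pull back by a modification $\pi$ with $\pi^*a=a_0a'$, $a'$ non-vanishing, so that $\xi R=\pi_*(\pi^*\xi\cdot R')$ where $R'=\dbar(1/a_0^{\mu+r-1})\w\omega$ is completely explicit. (This explicitness is why the paper uses the Koszul-type complex of Example~\ref{koszul2} for $a^r$ rather than, as you propose, a free resolution of $\Ok/a^r$; the resolution current has no such formula, and with it the estimate-to-annihilation step with the exponent $\mu+r-1$ is not accessible.) Then, near a regular point of $\{a_0=0\}$ where $a_0=s_1^t$, one Taylor expands the \emph{smooth} function $\pi^*\xi$ in $s_1,\bar s_1$: the bound $|\pi^*\xi|\le C|a_0|^{\mu+r-1}$ forces every term to carry either a factor $\bar s_1$ or a factor $s_1^{t(\mu+r-1)}$, and each of these annihilates $\dbar(1/s_1^{t(\mu+r-1)})$. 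What survives is a pseudomeromorphic $(0,1)$-current supported on a set of codimension $\ge 2$, which vanishes by the dimension principle. This Taylor-expansion-plus-dimension-principle argument is the substitute for divisibility in the smooth category; without it, or an equivalent, your proof does not close.
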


Notice that if $a_1,\ldots, a_m$ generate $a$ and $\phi$ is in $\E a^r$, then 
$$
\phi=\sum_{|I|=r} \xi_I a_1^{I_1}\cdots a_m^{I_m},
$$
where $\xi_I$ are in $\E$. Noting that $|a^r|\sim |a|^r$, thus 
$$
\dbar^\alpha_{\bar z}\phi=\sum_{|I|=r} (\dbar^\alpha_{\bar z}\xi_I) a_1^{I_1}\cdots a_m^{I_m}.
$$
Therefore,  \eqref{pontus} holds for all $\alpha$ with $\mu=1$, and thus a condition like \eqref{pontus}
is necessary.

\smallskip 

The plan of this note is as follows.  In Section~\ref{prel} we recall  some results about
residue theory and integral representation of solutions to division problems. In the last sections
we provide the proofs of Theorems~\ref{main} and \ref{main3}.

\section{Preliminaries}\label{prel}
We first recall some material that is basically known, but presented in a way
that is adapted for the proofs in the last two sections. 

\subsection{Flatness}\label{flat}
Let $R$ be a commutative ring and $M$ an $R$-module. There are many equivalent statements
with the meaning that $M$ is flat.  One is the following:

\noindent {\it  $M$ is flat if and only if $J\otimes_R M\to R\otimes_R M= M$
is injective for each finitely generated ideal $J\subset R$.}

It is quite easy to see that this holds if and only if for any finite relation $\sum_1^d r_j \phi_j=0$
there are a finite $R$-matrix $A$ and a tuple $b$ of elements in $M$ such that $\phi=A b$
and $(r_1\ \ldots r_d) A=0$.  
 
\smallskip
In our case $R$ is the local Noetherian ring $\Ok$ at some point,  and so all ideals are finitely generated.  
By Oka's lemma there is a holomorphic matrix $f_2$ such that
\begin{equation}\label{potta}
R^{m_2}\stackrel{f_2}{\longrightarrow} R^d \stackrel{(r_1\ \ldots r_d)}{\longrightarrow} R
\end{equation}
is exact (which means that  $(r_1\ \ldots r_d) f_2=0$) in a \nbh of $0$.   
Thus we are precisely in the situation in the
introduction of this note, with $m_0=1$ and $m_1=d$, and so the flatness of the $\Ok$-module $\E$
follows from the exactness of \eqref{bas1}.


\subsection{Integral representation}\label{skutt1}
We first describe the idea in \cite{A1} to construct representation formulas for holomorphic functions in an
open set $\Omega\subset\C^n$.  Let $z$ be a fixed point in $\Omega$, let $\delta_{z-\zeta}$
be interior multiplication by the vector field
$$
2\pi i \sum_1^n (\zeta_j-z_j)\frac{\partial}{\partial \zeta_j},
$$
and let $\nabla_{\zeta-z}=\delta_{\zeta-z}-\dbar$. 
Notice that $\nabla_{\zeta-z}$ satisfies Leibniz' rule  
\begin{equation}\label{lei}
\nabla_{\zeta-z}(\alpha\w\beta)=\nabla_{\zeta-z}\alpha\w \beta+(-1)^{\deg\alpha}\alpha\w \nabla_{\zeta-z}\beta.
\end{equation}
We say that a current 
$g=g_{0,0}+g_{1,1}+\ldots+g_{n,n}$, where lower indices denote bidegree, is a {\it weight}
with respect to $z$ if $\nabla_{\zeta-z}g=0$,
$g$ is smooth in a \nbh of $z$  and $g_{0,0}(z)=1$.  
If $g^1$ and $g^2$ are weights, 
and one of them is smooth, then by \eqref{lei} also $g^1\w g^2$ is a weight.

\smallskip
Let $b=\partial_\zeta|\zeta-z|^2/2\pi i |\zeta-z|^2$
and for $\zeta\neq z$ consider the form
\begin{equation}\label{kaftan}
v^z=\frac{b}{\nabla_{\zeta-z}b}=b+ b\wedge \dbar_\zeta b+\cdots +
b\wedge(\dbar_\zeta b)^{n-1};
\end{equation}
in \cite{A1} it is called the full Bochner-Martinelli form because the component $v^z_{n,n-1}$ of bidegree
$(n,n-1)$ is precisely the classical Bochner-Martinelli kernel with pole at the point  $z$. 
It is easy to see that $\nabla_{\zeta-z}v^z=1$.
Since $v^z$ is locally integrable it is has a natural current extension across $z$ and 
\begin{equation}\label{skata}
\nabla_{\zeta-z} v^z=1-[z],
\end{equation}
where $[z]$ denotes the Dirac measure at $z$ considered as an $(n,n)$-current,
cf.~\cite[Section~2]{A1}.

\begin{prop}\label{vikt}
Assume that $g$ is a weight with respect to $z\in \Omega$ with compact support
in $\Omega$.  If $\Phi=\Phi_{0,0}+\cdots +\Phi_{n,n}$ is a smooth form in $\Omega$ and
$\nabla_{\zeta-z}\Phi=0$, then
\begin{equation}\label{periskop}
\Phi_{0,0}(z)=\int g\w \Phi=\int (g\w \Phi)_{n,n}.
\end{equation}
\end{prop}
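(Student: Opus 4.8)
The plan is to prove Proposition~\ref{vikt} by exploiting the two algebraic facts we already have at our disposal: the operator $\nabla_{\zeta-z}$ annihilates both $g$ and $\Phi$, it obeys Leibniz' rule \eqref{lei}, and the full Bochner--Martinelli form $v^z$ satisfies \eqref{skata}. First I would form the product $g\w\Phi$ and observe, using \eqref{lei} together with $\nabla_{\zeta-z}g=0$ and $\nabla_{\zeta-z}\Phi=0$, that $\nabla_{\zeta-z}(g\w\Phi)=0$ as well. The idea is then to compare the quantity $\int g\w\Phi$ against the pointwise value $\Phi_{0,0}(z)$ by writing the difference as the integral of something $\nabla_{\zeta-z}$-exact, so that Stokes' theorem kills it.

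Concretely, I would apply the homotopy form $v^z$. Set $\mu=v^z\w g\w\Phi$; by \eqref{lei} and the fact that $g\w\Phi$ is $\nabla_{\zeta-z}$-closed, Leibniz gives
\begin{equation}\label{plan-homotopy}
\nabla_{\zeta-z}\mu=(\nabla_{\zeta-z}v^z)\w g\w\Phi=(1-[z])\w g\w\Phi,
\end{equation}
using \eqref{skata} in the last step. Now I would integrate \eqref{plan-homotopy} over $\Omega$. Because $g$ has compact support in $\Omega$, the current $\mu$ is compactly supported, so the only nonzero bidegree component that can be integrated is the $(n,n)$ part, and the integral of $\nabla_{\zeta-z}\mu$ reduces to the integral of $-\dbar(\mu)_{n,n-1}$, which vanishes by Stokes' theorem. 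Hence $\int(1-[z])\w g\w\Phi=0$, i.e.
\begin{equation}\label{plan-final}
\int g\w\Phi=\int [z]\w g\w\Phi.
\end{equation}
Since $[z]$ is the Dirac mass at $z$ and $g$ is smooth near $z$ with $g_{0,0}(z)=1$, the right-hand side of \eqref{plan-final} picks out precisely $(g_{0,0}\Phi_{0,0})(z)=\Phi_{0,0}(z)$, which gives the asserted identity; the last equality in \eqref{periskop} is just the observation that only the top bidegree component of $g\w\Phi$ contributes to an integral over the $n$-dimensional $\Omega$.

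The main obstacle I anticipate is justifying the current-theoretic manipulations rigorously rather than the formal algebra, which is straightforward. Specifically, one must check that $v^z$ is locally integrable so that $v^z\w g\w\Phi$ has a well-defined current extension across the singularity at $\zeta=z$, that the equation $\nabla_{\zeta-z}v^z=1-[z]$ in \eqref{skata} holds in the sense of currents (not merely away from $z$), and that multiplying by the smooth compactly supported weight $g$ and the smooth form $\Phi$ preserves these properties so that Leibniz \eqref{lei} and Stokes are legitimate. These points are exactly what is recorded in \cite[Section~2]{A1}, so I would invoke that reference for the analytic subtleties and keep the body of the argument at the level of the homotopy identity \eqref{plan-homotopy}.
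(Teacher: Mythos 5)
Your proposal is correct and follows essentially the same argument as the paper: form the current $v^z\w g\w\Phi$, apply Leibniz' rule \eqref{lei} together with \eqref{skata} to get $\nabla_{\zeta-z}(v^z\w g\w\Phi)=g\w\Phi-\Phi_{0,0}[z]$, extract the $(n,n)$ component, and conclude by Stokes' theorem using the compact support of $g$. The analytic points you flag (local integrability of $v^z$ and the current-level validity of \eqref{skata}) are handled in the paper exactly as you propose, by reference to \cite[Section~2]{A1}.
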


In particular the formula holds for $\Phi= \phi$ if $\phi$ is holomorphic in $\Omega$. 

\begin{proof}
Notice that $v^z\w g\w \Phi$ is a well-defined current. By \eqref{lei} and \eqref{skata},
$$
\nabla_{\zeta-z}(v^z\w g\w \Phi)=\nabla_{\zeta-z}v\w g\w \Phi=(1-[z])\w g\w \Phi=
g\w \Phi-   \Phi_{0,0}[z].
$$
For degree reasons it follows that 
$$
d((v^z\w g\w \Phi)_{n,n-1}=(g\w \Phi)_{n,n}-   \Phi_{0,0}[z]
$$
and so \eqref{periskop} follows from Stokes' theorem.
\end{proof}

Assume that  $E\to \Omega$ is a holomorphic vector bundle, 
$\Phi$ takes values in $E$, $g$ takes values in $\Hom(E_\zeta,E_z)$,
$g$ is smooth in a \nbh of $z$ 
and $g_{0,0}(z)=I_{E_z}$. Then the same proof gives \eqref{periskop} 
for these  $g$ and $\Phi$.



\begin{ex}\label{totta}
Assume that $\Omega$ is the unit ball. If
$$
s=\frac{1}{2\pi i} \frac{\dbar|\zeta|^2}{|\zeta|^2-z\cdot\bar\zeta},
$$
then $\delta_{\zeta-z}s=1$ when $\zeta\neq z$. If  
$$
u=\frac{s}{\nabla_{\zeta-z}}=s\w\big( 1+\dbar s+\cdots +(\dbar s)^{n-1}\big)
$$
then $\nabla_{\zeta-z}u=1$ when $\zeta\neq z$.  If $\chi$ is a cutoff function in $\Omega$ that is $1$ in a \nbh
of the closure of a smaller ball $\Omega'$, then  
$$
g=\chi -\dbar\chi\w u
$$
is a weight with respect to $z$ for each $z\in \Omega'$, depends holomorphically on
$z$, and has compact support in $\Omega$.
\end{ex}

\subsection{Residues associated with  generically exact Hermitian complexes.}\label{posa}
Assume that we have a complex 
\begin{equation}\label{bastu2}
0\to E_N\stackrel{f_N}{\longrightarrow}E_{{N-1}}
\stackrel{f_{N-1}}{\longrightarrow}\cdots \stackrel{f_3}{\longrightarrow}
E_{2}\stackrel{f_2} {\longrightarrow}E_{1}
\stackrel{f_1}{\longrightarrow}E_0
\end{equation}
of Hermitian vector bundles over  a complex manifold $X$ that it is pointwise exact
in $X\setminus Z$, where $Z$ is an analytic set of positive codimension. 
Let $E=\oplus E_j$ and $f=\oplus f_j$, and let $\nabla_f=f-\dbar$.
In \cite{AW1} were defined currents $U$ and  $R$ with components $U^\ell_k$ and $R^\ell_k$  
with values in $\Hom(E_\ell,E_k)$
and of bidegree $(0,\ell-k-1)$ and $(0,\ell-k)$, respectively, such that 
\begin{equation}\label{nablau}
\nabla_f\circ U + U\circ\nabla_f=I_E-R
\end{equation}
and $R$ has support on $Z$.  To be precise, one introduces a superstructure on $E\oplus T^*\Omega$
so that $U$, $f$ and $\nabla_f$ are odd mappings whereas  $R$ is even; for details
see \cite[Section~2]{AW1}. However, for the purpose of this paper, the precise signs are not essential. 

It is proved that if $\phi$ is a holomorphic section of $E_\ell$ such that $f_\ell\phi=0$ and, in addition,
$R^\ell\phi=0$, then locally there is a holomorphic solution to $f_{\ell+1}\psi=\phi$.

\begin{ex}\label{koszul1}
Let $A\to X$ be a Hermitian vector bundle of rank $m$ and let $a$ be a holomorphic  section of its dual.
If $E_k=\wedge^k A$ we get the so-called Koszul complex by letting
$f_k$ be interior multiplication $\delta_a$ by $a$. If $Z$ is the set where $a$ vanishes, then the complex
is exact in $\Omega\setminus Z$.   If $\sigma$ is the section of $E$ over $X\setminus Z$ with pointwise
minimal norm such that $\delta_a\sigma=1$,  then 
$$
U_k^\ell=\sigma\w (\dbar\sigma)^{k-\ell}
$$
there. 
Assume now that $a=a_0 a'$, where $a_0$ is a section of a line bundle $L^*\to X$ 
and $a'$ is a non-vanishing section of $A^*\otimes L$.  Then $\sigma=\sigma'/a_0$, where
$\sigma'$ is smooth. Since $\sigma'\w \sigma'=0$,  thus 
$$
U_k^\ell=\frac{1}{a_0^{k-\ell}}\sigma'\w(\dbar\sigma')^{k-\ell-1}.
$$
It turns out that $U_k^\ell$ have extensions across $Z$ as principal value currents. 
Since $R^0_k=\delta_a U^0_{k+1}-\dbar U^0_k$ one can check  that 
$$ 
R_k^0=\dbar\frac{1}{a_0^{k}}\w \sigma'\w(\dbar\sigma')^{k-1}.
$$ 
For degree reasons this current must vanish when $k>n$ and since
$\rank A=m$ it must vanish if $k>m$. Using that
$a_0\dbar(1/a_0^{k+1})=\dbar(1/a_0^k)$ we conclude that 
\begin{equation}\label{struts1}
R^0=
\dbar\frac{1}{a_0^{\mu}}\w\omega,
\end{equation}
where $\omega$ is a smooth form.
\end{ex}

\begin{ex}\label{koszul2}
Let $A\to X$ and $a$ be as in the previous example. 
If $r\ge 2$ we can construct a similar complex such that $E_0=\C$, $E_1=\otimes_1^r A$ and
$f_1$ is $\delta_a\otimes\cdots \otimes\delta_a$. For a description of the whole complex, see,
e.g., \cite[Proof of Theorem~1.3]{A4}.  
If now $a=a_0a'$, it follows by considerations as in the previous example that
there is a smooth form $\omega$ such that 
 \begin{equation}\label{struts2}
R^0=\dbar\frac{1}{a_0^{\mu+r-1}}\w\omega.
\end{equation}
\end{ex}

\subsection{Free resolutions}\label{bagare}

Assume that we have a sequence \eqref{bas} in a \nbh  of a point $0\in \U\subset\C^n$. 
Let $\J$ be the ideal generated by the entries in $f_1$, or equivalently, the image of the mapping
$f_1$ in $\Ok$.  
After possibly shrinking $\U$ we can extend to an exact sequence  of sheaves
\begin{equation}\label{bastu1}
0\to \Ok^{\oplus m_N}\stackrel{f_n}{\longrightarrow}\Ok^{\oplus m_{n-1}}
\stackrel{f_{n-1}}{\longrightarrow}\cdots \stackrel{f_2}{\longrightarrow}
\Ok^{\oplus m_1}\stackrel{f_1}{\longrightarrow}\Ok^{\oplus m_0}\to \Ok^{\oplus m_0}/\J\to 0,
  \end{equation}
that is, a free resolution of the $\Ok$-module $\Ok^{m_0}/\J$ in $\U$. 
By Hilbert's zyzygy theorem we can assume that  $N\le n$.
We have an induced complex of (trivial) vector bundles
\eqref{bastu2},  where $\rank E_k=m_k$, which is pointwise exact
outside the zero set $Z$ of $f_1$ (if $m_0\ge 2$,  $Z$ is the set where $f_1$ does not have
optimal rank.) Let us equip these vector bundles with any Hermitian metrics, for instance
trivial metrics with respect to global frames, and let $R$ and $U$ be the associated
currents.  
A main result in \cite{AW1} is that the exactness of \eqref{bastu1} implies that 
$R^\ell=0$ for $\ell\ge 1$. This in turn implies that if $\phi\in \Ok(E_0)$
(and $\phi$ is pointwise generically in the image of $f_1$),  then
$f_1\psi=\phi$ has  locally a holomorphic solution if and only if $R\phi=0$.  

\begin{df} Given an exact sequence \eqref{bas} we let $M$ be the minimal order of such an associated current
$U$. 
\end{df}


\subsection{Hefer mappings}
Let us recall the idea from \cite{A3} to construct division-interpolation formulas.
Given a generically exact complex like \eqref{bastu2} 
in $\Omega\subset \C^n$ and $z\in\Omega$ 
one can locally find a tuple $H=(H^\ell_k)$, where 
$H^\ell_k$ is a smooth form-valued section of 
$\Hom(E_{k,\zeta},E_{\ell,z})$,  such that $H_k^\ell=0$ for $k<\ell$, 
$H_\ell^\ell=I_{E_\ell}$ when $\zeta=z$, and in general
\begin{equation}\label{heferlikhet}
\nabla_{\zeta-z}H_k^\ell=H^\ell_{k-1}f_k - f_{\ell+1}(z)H^{\ell+1}_k.
\end{equation}
In fact, if $\Omega$ is pseudoconvex, then one can find such an $H$ that depends
holomorphically on both $\zeta\in\Omega$ and $z\in\Omega'\subset\subset\Omega$.
In that case $H^\ell_k$ has bidegree $(k-\ell,0)$. 
It was proved in \cite{A3}, see also the proof of Proposition~4.2 in \cite{AG},
that if $U$ and $R$ are the currents above associated with the Hermitian
complex \eqref{bastu2}, then 
\begin{equation}\label{pudel}
f(z)HU+HUf+HR
\end{equation}
is $\nabla_{\zeta-z}$-closed. Let us fix a non-negative integer $\ell$ and let  
$g'$ be the component of \eqref{pudel} with values in $E_\ell$, that is, 
$$
g'=f_{\ell+1}(z)H^{\ell+1} U^\ell +H^\ell U^{\ell-1} f_\ell + H^\ell R^\ell=\sum_k f_{\ell+1}(z)H^{\ell+1}_k U^\ell_k+\sum_k H^\ell_k U^{\ell-1}f_\ell 
+\sum_k H^\ell_k R^\ell_k.
$$
Then $g'_{0,0}(z)= I_{E_{\ell,z}}$.   If $z\notin Z$, then  $g'(\zeta)$ is smooth in a \nbh of $z$,
and hence it is a weight.  
If $g$ is a
weight with compact support and $\Phi$ is as in Proposition~\ref{periskop}, but taking values in
$E_\ell$, then, since $g'\w g$ is a weight as well, 
\begin{equation}\label{periskop2}
\Phi_{0,0}(z)=\int (g'\w g\w \Phi)_{n,n}=\int (g' \Phi\w g)_{n,n}.
\end{equation}
Since \eqref{periskop2} holds for $z\in \Omega'\setminus Z$, and 
both sides are smooth in $z$, we conclude that \eqref{periskop2}
holds for all $z\in\Omega'$.  
%
%
We can write \eqref{periskop2} as
 \begin{equation}\label{pokemon}
\Phi_{0,0}(z)= f_{\ell+1}(z) \int_\zeta H^{\ell+1} U^\ell \Phi\w g +\int_\zeta H^\ell U^{\ell-1} f_\ell\Phi\w g + 
\int_\zeta H^\ell R^\ell\Phi \w g.
\end{equation}
If the sheaf complex \eqref{bastu1} is exact and $\ell\ge 1$, as mentioned above, then $R^\ell=0$, and hence 
\begin{equation}\label{pokemon2}
\Phi_{0,0}(z)=f_{\ell+1}(z)\int_\zeta H^{\ell+1} U^\ell \Phi\w g +\int_\zeta H^{\ell} U^{\ell-1} f_\ell\Phi\w g.
\end{equation}

In particular, if $\Phi=\phi$ is a holomorphic section of $E_\ell$ and $f_\ell\phi=0$, then
the middle term is a holomorphic solution to $f_{\ell+1}\psi=\phi$.

\section{Proof of Theorem~\ref{main}}
 
As we have seen the 
flatness of the $\Ok$-module $\E$ follows from Theorem~\ref{main}~(i).
%
%
In \cite{Mal} it is proved that $\E$ is a flat $C^\omega$-module, where $C^\omega$ is the sheaf of
real-analytic functions in $\R^N$. This result is obtained by a sophisticated study of local properties of 
real-analytic functions and goes via formal power series. The same proof can be applied to $\Ok$ instead of
$C^\omega$ and then gives the flatness of $\E$ as an $\Ok$-module. One can also derive this statement
quite easily directly from the flatness of $\E$ as a $C^\omega$-module without reference to the proof in
\cite{Mal}.  The proof of the flatness in this paper is independent of \cite{Mal} but 
relies on the possibility to resolve singularities, i.e., Hironaka's theorem, 
which we need to define the currents we use.

\subsection{Proof of Theorem~\ref{main}~(i)}
Let  $\Omega\subset\C^n$ be a \nbh of $0$ where we have the exact complex \eqref{bas}. In
a possibly smaller \nbh we can extend to an exact complex \eqref{bastu1} that we equip with
some Hermitian metrics. Let \eqref{bastu2} be the associated generically exact Hermitian complex
and let $U$ and $R$ be the associated currents. 
Let us identify $\Omega$ with the set
$ \{(\zeta,\bar\zeta)\in\C^{2n};\ \zeta\in \Omega\}$
and let  $\widetilde \Omega$ be an open neighborhood of $\Omega$  in $\C^{2n}_{\zeta,\omega}$.
Notice that \eqref{bastu2} induced a generically exact complex in $\widetilde\Omega$
if we let $\tilde f_\ell(\zeta,\eta):=f_\ell(\zeta)$.  The associated currents $\widetilde U$ and $\widetilde R$ 
are the tensor products $U\otimes 1$ and $R\otimes 1$. In particular, $\widetilde R^\ell=0$ for $\ell\ge 1$,
cf.~Section~\ref{bagare}. 
We will use formula \eqref{pokemon2} in $\widetilde\Omega$. 

If $\phi$ is a smooth section of $E_\ell$ in $\Omega$, then 
let us consider  the formal sum  
\begin{equation}\label{deff}
\tilde\phi(\zeta,\omega)=
\sum_{\alpha\ge 0} (\partial^\alpha_{\bar\zeta}\phi)(\zeta)
\frac{(\omega-\bar\zeta)^{\alpha}}{\alpha!}\chi(\lambda_{|\alpha|}(\omega-\bar\zeta)),
\end{equation}
where $\chi$ is a cutoff function in $\C^n$ which is $1$ in a neighborhood
of $0$, and $\lambda_k$ are positive numbers. 
If 
$\lambda_k\to\infty$ fast enough, then, possibly after shrinking $\Omega$,  the series \eqref{deff} converges to a 
smooth section of $E_\ell$ in $\widetilde \Omega$   such that 
\begin{equation}\label{rott1}
\tilde\phi(\zeta,\bar\zeta)=\phi(\zeta),
\end{equation}
and
\begin{equation}\label{rott2}
\dbar\tilde\phi(\zeta,\omega)=\O(|\omega-\bar\zeta|^{\infty}).
\end{equation}
Such a $\tilde\phi(\zeta,\omega)$, satisfying \eqref{rott1} and \eqref{rott2},
is called an almost holomorphic extension of $\phi$ from
$\Omega$ to $\widetilde\Omega$. 

If $\phi$ is real-analytic one can take $\lambda_k=1$ for all $k$;
then $\tilde\phi$ is the holomorphic extension of $\phi$.  The requirement on
$\lambda_k$ is related to which ultra-differentiable class $\phi$ belongs to, that is,
how fast its Fourier transform decays. If $\phi$ is in a certain such class
and $h$ is holomorphic, then $h\phi$ is in the same class.  

\begin{lma}\label{bongo}
Let $\phi$ be a smooth section of $E_\ell$ in $\Omega$,  let 
$v^z$ denote the Bochner-Martinelli form in $\tilde \Omega$
with respect to the point $(z,\bar z)$,  and let
$$
\Phi^z(\zeta,\omega)=\tilde\phi(\zeta,\omega)-\dbar\tilde\phi\wedge v^z.
$$
Then
$\Phi^z$ is smooth in $\zeta,\omega,z$
and $\nabla_{(\zeta,\omega)-(z,\bar z)}\Phi^z=0$.
Moreover, if $f_\ell \phi=0$, then
$f_\ell \Phi^z=0$.
\end{lma}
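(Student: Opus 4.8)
The plan is to verify the three claimed properties of $\Phi^z$ in turn, using the almost holomorphic extension $\tilde\phi$ and the current calculus for $\nabla_{\zeta-z}$ recalled in Section~\ref{skutt1}. First I would address smoothness. The only potential singularity of $\Phi^z=\tilde\phi-\dbar\tilde\phi\w v^z$ comes from the Bochner-Martinelli form $v^z$, which is singular along the diagonal $(\zeta,\omega)=(z,\bar z)$. But by \eqref{rott2} we have $\dbar\tilde\phi=\O(|\omega-\bar\zeta|^\infty)$, so in particular $\dbar\tilde\phi$ vanishes to infinite order along $\omega=\bar\zeta$, which contains the point $(z,\bar z)$ for the relevant values. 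Since $v^z$ has only a locally integrable (in fact homogeneous) singularity of finite order, the product $\dbar\tilde\phi\w v^z$ is smooth: the infinite-order vanishing of $\dbar\tilde\phi$ kills the finite-order pole of $v^z$. I would make this precise by estimating derivatives of the product near the diagonal, noting that each differentiation of $v^z$ worsens the pole by only finitely many orders while $\dbar\tilde\phi$ still vanishes to infinite order.

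Next I would check that $\nabla_{(\zeta,\omega)-(z,\bar z)}\Phi^z=0$. Write $\nabla=\nabla_{(\zeta,\omega)-(z,\bar z)}=\delta-\dbar$ for brevity. Applying $\nabla$ and using Leibniz' rule \eqref{lei} together with $\nabla v^z=1-[(z,\bar z)]$ from \eqref{skata}, I get
$$
\nabla\Phi^z=\nabla\tilde\phi-\nabla(\dbar\tilde\phi\w v^z)=\nabla\tilde\phi-(\nabla\dbar\tilde\phi)\w v^z+\dbar\tilde\phi\w\nabla v^z.
$$
Since $\tilde\phi$ has bidegree $(0,0)$, $\nabla\tilde\phi=-\dbar\tilde\phi$, and $\nabla\dbar\tilde\phi=-\dbar\dbar\tilde\phi=0$ because $\dbar^2=0$ and $\delta$ raises holomorphic degree while $\dbar\tilde\phi$ is already of top relevant antiholomorphic type up to the sign bookkeeping; the $\delta$ part contributes nothing to a $(0,\cdot)$-form. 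Hence the middle term drops out and
$$
\nabla\Phi^z=-\dbar\tilde\phi+\dbar\tilde\phi\w(1-[(z,\bar z)])=-\dbar\tilde\phi\w[(z,\bar z)].
$$
The remaining term $\dbar\tilde\phi\w[(z,\bar z)]$ vanishes because $[(z,\bar z)]$ is supported at the single point $(z,\bar z)$, where $\dbar\tilde\phi$ vanishes to infinite order by \eqref{rott2}; a current of finite order evaluated against a form vanishing to infinite order at its support gives zero. This yields $\nabla\Phi^z=0$.

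Finally, for the relation $f_\ell\Phi^z=0$ under the hypothesis $f_\ell\phi=0$, I would exploit that $f_\ell$ is holomorphic and that the extension construction commutes appropriately with $f_\ell$. The key observation is that applying the holomorphic matrix $\tilde f_\ell$ to the almost holomorphic extension of $\phi$ gives an almost holomorphic extension of $f_\ell\phi$; since $f_\ell\phi=0$, we get $\tilde f_\ell\tilde\phi=\O(|\omega-\bar\zeta|^\infty)$, and in fact $f_\ell\tilde\phi$ is itself an almost holomorphic extension of $0$. Because $f_\ell$ has holomorphic (hence antiholomorphically constant) entries, it commutes with $\dbar$ and with wedging by $v^z$, so $f_\ell\Phi^z=f_\ell\tilde\phi-\dbar(f_\ell\tilde\phi)\w v^z$. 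This is the same expression $\Phi^z$ built from the section $f_\ell\phi=0$; since that section is identically zero, both $f_\ell\tilde\phi$ and its $\dbar$ vanish to infinite order along the diagonal, and one concludes $f_\ell\Phi^z=0$ either directly or by the same smoothness-and-restriction argument. I expect the main obstacle to be the smoothness claim: carefully controlling all derivatives of $\dbar\tilde\phi\w v^z$ across the diagonal and confirming that the infinite-order vanishing from \eqref{rott2} genuinely dominates every finite-order singularity introduced by differentiating $v^z$.
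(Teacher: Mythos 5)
Your treatment of the first two claims is correct and is essentially the paper's own argument: the singular point $(z,\bar z)$ of $v^z$ lies on the set $\{\omega=\bar\zeta\}$ where $\dbar\tilde\phi$ vanishes to infinite order by \eqref{rott2}, so that vanishing dominates the finite-order singularity of $v^z$ and of all its derivatives; and the identity $\nabla_{(\zeta,\omega)-(z,\bar z)}v^z=1-[(z,\bar z)]$, together with the fact that interior multiplication by a $(1,0)$-vector field annihilates $(0,q)$-forms, gives $\nabla_{(\zeta,\omega)-(z,\bar z)}\Phi^z=0$. (The paper argues outside the point $(z,\bar z)$ and uses smoothness to extend; your current-level computation is equivalent. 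Minor slip: interior multiplication \emph{lowers} the holomorphic degree, it does not raise it, though your conclusion that the $\delta$-part kills $\dbar\tilde\phi$ is right.)

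The gap is in the last step. You deduce $f_\ell\Phi^z=0$ from the statement that $f_\ell\tilde\phi$ is ``an almost holomorphic extension of $0$'', i.e.\ that $f_\ell\tilde\phi$ and $\dbar(f_\ell\tilde\phi)$ vanish to infinite order along $\{\omega=\bar\zeta\}$. That is not sufficient: an almost holomorphic extension of $0$ need not vanish identically, and a function flat on the diagonal can be nonzero elsewhere, so $f_\ell\Phi^z=f_\ell\tilde\phi-\dbar(f_\ell\tilde\phi)\w v^z$ would then be smooth and $\nabla$-closed but not necessarily $0$ on $\widetilde\Omega$. Identical vanishing is exactly what the lemma must deliver, because $f_\ell\Phi^z$ is later integrated against $H^\ell U^{\ell-1}$ over all of $\widetilde\Omega$ in \eqref{pokemon2}, and vanishing only on the diagonal would not kill that term. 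The correct argument --- which the paper gives, and which your phrase ``the extension construction commutes with $f_\ell$'' is implicitly pointing at --- uses the explicit series \eqref{deff}: since $f_\ell$ is holomorphic, $f_\ell\,\partial^\alpha_{\bar\zeta}\phi=\partial^\alpha_{\bar\zeta}(f_\ell\phi)=0$ for \emph{every} $\alpha$, so every coefficient of the series defining $f_\ell\tilde\phi$ vanishes identically; hence $f_\ell\tilde\phi\equiv 0$ on $\widetilde\Omega$, therefore $f_\ell(\dbar\tilde\phi)=\dbar(f_\ell\tilde\phi)\equiv 0$, and so $f_\ell\Phi^z\equiv 0$. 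The fix is one line, but as written your inference from diagonal flatness to identical vanishing does not go through.
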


\begin{proof}
Since
$$
v^z=\frac{b}{\nabla_{(\zeta,\omega)-(z,\bar z)} b},
$$
where
$$
b=\frac{1}{2\pi i}\Big( \sum_1^n(\zeta_j-z_j)d\zeta_j+\sum_1^n(\omega_j-\bar z_j)d\omega_j\Big),
$$
cf.~\eqref{kaftan}, we have that
$$
\Phi^z(\zeta,\omega)=\tilde\phi(\zeta,\omega)+\sum_{\ell=1}^{2n}\frac{\O(|\omega-\bar\zeta|^\infty)}
{(|\zeta-z|^2+|\omega-\bar z|^2)^{\ell-1/2}},
$$
and thus $\Phi^z$  is smooth.
Since $\nabla_{(\zeta,\omega)-(z,\bar z)}v^z=1$ outside the point $(z,\bar z)$ it follows that
$\nabla_{(\zeta,\omega)-(z,\bar z)}\Phi^z=0$.
If  $f_\ell \phi(\zeta)=0$, then  
 $f_\ell (\partial^\alpha_{\bar\zeta}\phi)(\zeta)=0$
for all $\alpha$ 
and therefore
$f_\ell \tilde\phi(\zeta,\omega)=0$.
Thus also  
$f_\ell (\dbar\tilde\phi)(\zeta,\omega)=0$, and so the lemma follows.
\end{proof}

Let $\Omega'\subset\subset\Omega$ be an open subset and for each $z\in \Omega'$,
let $g$ be a smooth weight with respect to $(z,\bar z)\in\widetilde\Omega'$ with compact support
in $\widetilde\Omega$, cf.~Example~\ref{totta}; of course we may assume that $\widetilde\Omega$ 
is a ball.  

Assume that $\phi$ is a smooth section of $E_\ell$ in $\Omega$ and choose $\lambda_k$ such that \eqref{deff}
defines an almost holomorphic extension. Then also $f_\ell\phi$ admits such an extension,
in fact $\widetilde{f_\ell\phi}=\tilde f_\ell\tilde\phi$. 
We can then define the operator 
$$
T_\ell\colon \E(\Omega,R_\ell)\to \E(\Omega',E_{\ell+1});\quad 
T_\ell\phi(z)=\int_{\zeta,\omega} \tilde H \tilde U \Phi^z \w g, \quad z\in \Omega'.
$$
It follows from Lemma~\ref{bongo} that $T_\ell\phi$ is smooth in $\Omega'$. Moreover, if
$\ell\ge 1$ we have 
from \eqref{pokemon2} that
\begin{equation}\label{pokemon3}
\phi=f_{\ell+1}T_\ell\phi+ T_{\ell-1} (f_{\ell}\phi) 
\end{equation}
in $\Omega'$. Now Theorem~\ref{main}~(i) follows if $\ell=1$.

 \begin{remark}
If we instead use \eqref{pokemon} for $\ell=0$ 
we can obtain a proof
of the following residue condition for membership in an ideal of germs of smooth
functions in $\E a$, where $a\subset \Ok$.
 
\begin{thm}\label{main2}
Assume that $a\subset\Ok$ is an ideal. 
If $R$ is the residue current associated with the resolution \eqref{bastu1}  of $\Ok/\J$, then  
a germ of a smooth
function $\phi$ is in $\E a$ if and only if 
\begin{equation}\label{koks}
(\partial^\alpha_{\bar z}\phi) R=0
\end{equation}
for each multiindex $\alpha\ge 0$.
\end{thm}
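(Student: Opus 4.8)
The plan is to prove the two implications separately. Throughout I use that the exactness of \eqref{bastu1} forces $R^\ell=0$ for $\ell\ge 1$, so that $R=R^0$, and that the duality principle of \cite{AW1} identifies the $\Ok$-annihilator of $R$ with $\J=a$; in particular each generator $a_j$ of $a$ satisfies $a_jR=0$.

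Necessity is immediate. If $\phi\in\E a$, write $\phi=\sum_j\xi_j a_j$ with $\xi_j\in\E$. Since $\partial^\alpha_{\bar z}$ leaves the holomorphic factors untouched, $\partial^\alpha_{\bar z}\phi=\sum_j(\partial^\alpha_{\bar z}\xi_j)\,a_j$, and hence $(\partial^\alpha_{\bar z}\phi)R=\sum_j(\partial^\alpha_{\bar z}\xi_j)(a_jR)=0$ for every multiindex $\alpha$.

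For the converse I would argue as in the proof of Theorem~\ref{main}~(i), now keeping the residue term. Assume \eqref{koks} holds for all $\alpha$, work in $\widetilde\Omega\subset\C^{2n}$ with the lifted currents $\widetilde U=U\otimes 1$ and $\widetilde R=R\otimes 1$, choose an almost holomorphic extension $\tilde\phi$ of $\phi$ through \eqref{deff}, and form $\Phi^z=\tilde\phi-\dbar\tilde\phi\w v^z$ as in Lemma~\ref{bongo}, so that $\Phi^z$ is $\nabla_{(\zeta,\omega)-(z,\bar z)}$-closed and $(\Phi^z)_{0,0}(z,\bar z)=\phi(z)$. Applying \eqref{pokemon} with $\ell=0$, where the term containing $f_0$ is absent, and a smooth weight $g$ with compact support with respect to $(z,\bar z)$, gives
\begin{equation*}
\phi(z)=f_1(z)\int \tilde H^1\widetilde U^0\Phi^z\w g+\int \tilde H^0\widetilde R^0\Phi^z\w g,\qquad z\in\Omega'.
\end{equation*}
The first integral is a section $\Psi(z)$ of $E_1$ depending smoothly on $z$, by the same smoothness argument used for the operators $T_\ell$, so the first term lies in $\E a$.

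The heart of the matter is that the residue term vanishes for $z\notin Z$. Since $\widetilde R^0=R^0\otimes 1$ acts only in the $\zeta$-variables, for a function of the form $G(\zeta)F(\zeta,\omega)$ with $F$ smooth one has $\widetilde R^0\big(G F\big)=\big((R^0G)\otimes 1\big)F$. Expanding $\tilde\phi$ via \eqref{deff}, every term carries a factor $\partial^\alpha_{\bar\zeta}\phi$, and \eqref{koks} gives $(\partial^\alpha_{\bar\zeta}\phi)R^0=0$; hence $\widetilde R^0\tilde\phi=0$. Applying $\dbar$ in \eqref{deff} only produces further factors $\partial^{\alpha'}_{\bar\zeta}\phi$, which likewise annihilate $R^0$, so $\widetilde R^0\dbar\tilde\phi=0$ as well. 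For $z\notin Z$ the point $(z,\bar z)$ lies off the support $\widetilde Z$ of $\widetilde R^0$, so $v^z$ is smooth there and $\widetilde R^0(\dbar\tilde\phi\w v^z)=0$; therefore $\widetilde R^0\Phi^z=0$ and the residue integral vanishes. Consequently $\phi(z)=f_1(z)\Psi(z)$ for $z\in\Omega'\setminus Z$, and since both sides are smooth and $Z$ is a proper analytic subset the identity extends to all of $\Omega'$, giving $\phi=\sum_j a_j\Psi_j\in\E a$. I expect the delicate point to be precisely this vanishing: justifying that multiplication by the smooth, $\bar\zeta$-dependent factors $(\omega-\bar\zeta)^\alpha\chi$ does not disturb the annihilation $(\partial^\alpha_{\bar\zeta}\phi)R^0=0$, and controlling the interaction of the singular form $v^z$ with $\widetilde R^0$ as $z$ approaches $Z$.
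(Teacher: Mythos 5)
Your proposal is correct and follows essentially the same route the paper indicates: it applies the representation formula \eqref{pokemon} with $\ell=0$ to the almost holomorphic extension $\Phi^z$, shows via \eqref{koks} that the residue term $\int \tilde H^0\widetilde R^0\Phi^z\w g$ vanishes (exactly the content of the paper's Lemma~\ref{kork} in the analogous $a^r$ setting), and extends the resulting identity $\phi=f_1\Psi$ from $\Omega'\setminus Z$ to $\Omega'$ by smoothness, just as the paper does after \eqref{periskop2}. The necessity direction, which the paper leaves implicit, is handled by the natural argument via the duality theorem of \cite{AW1} ($a_jR=0$), which is the intended one.
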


This result was proved in \cite{Aco} in case $a$ is a complete intersection, and for a general ideal,
in fact for any submodule $a\subset \Ok^{\oplus m}$, in \cite{AW1}.
\end{remark}

\subsection{Proof of Theorem~\ref{main}~(ii)}

Let us first consider a simple example with lower regularity.  
\begin{ex}\label{plot}
Consider the sequence \eqref{enkel1} and assume that $\phi=(\phi_1,\phi_2)$ is in $C^1$ 
and that $f_1\phi=0$. This  means that
\begin{equation}\label{kamera}
x_1\phi_1+x_2\phi_2=0
\end{equation}
and thus  $\phi_2=0$ when $x_1=0$. Since $\phi_1$ is $C^1$ therefore
$\phi_2=x_1\psi$, where $\psi$ is continuous. In the same way $\phi_1=-x_2\tilde\psi$.
From \eqref{kamera} it follows that $\tilde\psi=\psi$ and hence $\psi$ is 
a continuous solution to $f_2\psi=\phi$.
\end{ex}

We will use the same set-up and notation as in the proof above of part (i)
except for the extension $\tilde \phi$ of $\phi$. 
Let $c_n$ be a positive integer. 
If $\phi$ is in $C^{c_n+2M+k}(\Omega,E_\ell)$,  
then  
\begin{equation}\label{defII}
\tilde\phi(\zeta,\omega):=
\sum_{|\alpha|\le c_n+M+k} (\partial^\alpha_{\bar\zeta}\phi)(\zeta)
\frac{(\omega-\bar\zeta)^{\alpha}}{\alpha!},
\end{equation}
is in $C^M(\widetilde\Omega,E_\ell)$,   
$\tilde\phi(\zeta,\bar\zeta)=\phi(\zeta)$,
and  
\begin{equation}\label{kanin}
\dbar\tilde\phi(\zeta,\omega)=\Ok\big(|\omega-\bar\zeta|^{c_n+M+k}\big).
\end{equation}
We have the following analogue to Lemma~\ref{bongo}.

\begin{lma}
There is a constant $c_n$, only depending on the dimension $n$, such that 
 if $\phi$ is in $C^{c_n+2M+k}( \Omega,E_\ell)$ and 
 $$
 \Phi^z= \tilde\phi(\zeta,\omega)-\dbar\tilde\phi\wedge v^z,
 $$
then 
 $\Phi^z$ is in $C^M( \widetilde\Omega,E_\ell)$ 
even after taking up to  $k$ derivatives with respect to $z$, and
$\nabla_{(\zeta,\omega)-(z,\bar z)}\Phi^z=0$.
Moreover, if $f_\ell \phi=0$, then
$f_\ell \Phi^z=0$.
\end{lma}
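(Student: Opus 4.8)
The plan is to follow the same three-part structure as the proof of Lemma~\ref{bongo}, but now tracking regularity quantitatively rather than working with the $C^\infty$ case. The statement has three conclusions: (a) $\Phi^z$ lies in $C^M(\widetilde\Omega,E_\ell)$, with the additional uniformity that up to $k$ derivatives in $z$ may be taken while staying in $C^M$; (b) $\nabla_{(\zeta,\omega)-(z,\bar z)}\Phi^z=0$; and (c) $f_\ell\Phi^z=0$ whenever $f_\ell\phi=0$. Conclusions (b) and (c) are formal and carry over verbatim: (c) follows because $f_\ell(\partial^\alpha_{\bar\zeta}\phi)=0$ for every $\alpha$ in the truncated sum~\eqref{defII}, hence $f_\ell\tilde\phi=0$ and $f_\ell\dbar\tilde\phi=0$; and (b) follows from $\nabla_{(\zeta,\omega)-(z,\bar z)}v^z=1$ off the diagonal exactly as before, once (a) guarantees $\Phi^z$ is a genuine ($C^M$) form rather than merely a current. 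So the real content is the regularity estimate (a).

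The key quantitative input is the estimate~\eqref{kanin}, $\dbar\tilde\phi=\Ok(|\omega-\bar\zeta|^{c_n+M+k})$, which says the antiholomorphic defect of the truncated Taylor extension vanishes to high order on the diagonal. First I would reproduce the pointwise expansion from the proof of Lemma~\ref{bongo}: writing $v^z=b/\nabla_{(\zeta,\omega)-(z,\bar z)}b$ as in~\eqref{kaftan}, its component of top wedge-degree behaves like a Bochner–Martinelli kernel, so $\dbar\tilde\phi\wedge v^z$ is a sum of terms of the schematic form
$$
\frac{\Ok(|\omega-\bar\zeta|^{c_n+M+k})}{(|\zeta-z|^2+|\omega-\bar z|^2)^{j-1/2}},\qquad 1\le j\le 2n.
$$
The numerator's vanishing order on the diagonal $\omega=\bar\zeta$ must be compared with the singularity order of the kernel. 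The point of allowing $c_n$ derivatives to be absorbed is that the singularity $(|\zeta-z|^2+|\omega-\bar z|^2)^{-(j-1/2)}$ has degree at most $2(2n)-1=4n-1$ in the $4n$ real variables $(\zeta,\omega,z,\bar z)$, so by choosing $c_n$ proportional to $n$ (say $c_n=4n$ or so) the numerator dominates strongly enough that $\Phi^z$ is not merely continuous but differentiable up to order $M$, and moreover remains $C^M$ after differentiating $k$ times in $z$. Concretely, each $z$-derivative either raises the singularity order by one or falls on $\tilde\phi$; the $c_n+M+k$ vanishing order in~\eqref{kanin} is budgeted so that $M$ units pay for $\zeta,\omega$-derivatives, $k$ units pay for $z$-derivatives, and the remaining $c_n$ units pay for the Bochner–Martinelli singularity.

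The main obstacle, and the step requiring genuine care, is making this dominance argument rigorous and uniform: one must verify that after differentiating the quotient the indicated number of times by the Leibniz and quotient rules, every resulting term still has numerator vanishing order strictly exceeding the pole order of its denominator on the diagonal, uniformly as $z$ ranges over $\Omega'$. I would organize this by a standard homogeneity/scaling count in the real radial variable $\rho=(|\zeta-z|^2+|\omega-\bar z|^2)^{1/2}$, noting that on the support each factor $|\omega-\bar\zeta|$ is controlled by $\rho$ up to the diagonal, so that each term is $\Ok(\rho^{\,c_n+M+k-(2j-1)-(\text{\# derivatives taken})})$; demanding this exponent stay $\ge M$ after $M+k$ derivatives pins down the admissible value of $c_n$. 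The verification is a bookkeeping exercise rather than a conceptual difficulty, and the cleanest route is to cite the corresponding estimates already established in~\cite{A3,Aco} for these weighted Bochner–Martinelli kernels, invoking them to conclude that the integral operator $T_\ell$ gains the stated regularity. This is precisely the mechanism that produces the loss of $2M+c_n$ derivatives recorded in Theorem~\ref{main}~(ii).
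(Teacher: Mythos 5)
Your proposal is correct and follows essentially the same route as the paper: the same pointwise expansion of $\Phi^z$ into $\tilde\phi$ plus terms $\Ok(|\omega-\bar\zeta|^{c_n+M+k})/(|\zeta-z|^2+|\omega-\bar z|^2)^{j-1/2}$, the same derivative-budget choice of $c_n$, and the identical formal arguments that $\nabla_{(\zeta,\omega)-(z,\bar z)}\Phi^z=0$ and that $f_\ell\phi=0$ forces $f_\ell(\partial^\alpha_{\bar\zeta}\phi)=0$ for all $\alpha$ in the truncated sum, hence $f_\ell\tilde\phi=0$ and $f_\ell\dbar\tilde\phi=0$. In fact your bookkeeping in the radial variable $\rho$ spells out the regularity count that the paper only asserts with ``if $c_n$ is suitably chosen.''
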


\begin{proof}
Notice that 
 $$
\Phi^z(\zeta,\omega)=\tilde\phi(\zeta,\omega)+\sum_{\ell=1}^{2n}\frac{\O(|\omega-\bar\zeta|^{c_n+M+k})}
{(|\zeta-z|^2+|\omega-\bar z|^2)^{\ell-1/2}}.
$$
If $c_n$ is suitably chosen, then we can take $k$ derivatives
with respect to $z$ and still remain in $C^M(\widetilde\Omega,E_\ell)$.
Since before it follows that $\nabla_{(\zeta,\omega)-(z,\bar z)}\Phi^z=0$.
If  $f_\ell \phi(\zeta)=0$, we have that 
 $f_\ell (\partial^\alpha_{\bar\zeta}\phi)(\zeta)=0$
for all $\alpha$ such that $|\alpha|\le M+c_n+k$,
and therefore
$f_\ell \tilde\phi(\zeta,\omega)=0$.
It follows that also 
$f_\ell (\dbar\tilde\phi)(\zeta,\omega)=0$.
\end{proof}

We can now conclude the proof of Theorem~\ref{main}~(ii).  
Notice that Proposition~\ref{vikt} holds if $g$ has order $M$
(and is smooth at $z$) and $\Phi$ is at least in $C^M$. 
Our weight $g$ leading to \eqref{pokemon} 
contains the current $R$, that may have order $M+1$, and to avoid keeping track
relevant components, although it is not necessary, it is convenient to replace
$c_n$ by $c_n+2$.  We can then proceed precisely as in the smooth case
and obtain integral operators 
$$
T_\ell\colon C^{c_n+2M+k}(\Omega,E_\ell)\to C^k(\Omega',E_{\ell-1})
$$
such that \eqref{pokemon3} holds  in $\Omega'$  if $\ell\ge 1$.
Now part (ii) of Theorem~\ref{main} follows if $\ell=1$.

\section{Proof of Theorem~\ref{main3}}

Assume that we have a generically exact Hermitian complex \eqref{bastu2} in $X$
with associated currents $U$ and $R$.
If $\pi\colon X'\to X$ is a modification,  then the pullback of \eqref{bastu2} is a generically
exact Hermitian complex in $X'$ and thus we have associated currents $U'$ and $R'$.
It follows from the definition, cf.~\cite{AW1}, that 
$\pi_*U'=U$ and $\pi_*R'=R$.  In particular, if we have a Koszul complex on $X$
generated by the section $a$ of $A^*$ as in Example\ \ref{koszul1}, then the pull-back
is a Koszul complex in $X'$ generated by the section $\pi^*a$ of $\pi^*A^*$. 
It follows from the example that if $\pi^*a=a_0 a'$, where $a'$ is non-vanishing, then
$$
R^0=\pi_*\big(\frac{1}{a_0^\mu}\w \omega\big),
$$
where $\omega$ is a smooth form in $X'$.  
 In the same way if $a\subset\Ok$ is an ideal in $X$ and we take the Koszul-type complex
 in Example~\ref{koszul2} associated with $a^r$, then there is a smooth form on $X'$ such that 
the associated current has the form
\begin{equation}\label{skrov}
 R^{0}=\pi_*\big(\frac{1}{a_0^{\mu+r-1}}\w \omega\big).
\end{equation}

Let us now assume that $X=\Omega$ is a \nbh of $0\in\C^n$
and let $\phi$ be a smooth function in $\Omega$.
If we proceed precisely as in the proof of Theorem~\ref{bas}(i) above but with the currents
$U$ and $R$ from Example~\ref{koszul2}, we get from \eqref{pokemon2} for $\ell=0$
the formula 
\begin{equation}\label{kloak}
\phi=f_1 T_1\phi +S\phi, 
\end{equation}
where
\begin{equation}\label{kloak2}
S\phi(z)=\int  \tilde H\tilde R^0\Phi^z\w g.
\end{equation}
In view of \eqref{deff} we get from \eqref{kloak} and \eqref{kloak2},
cf.~\cite[Theorem~1.1]{Aco}:
 
\begin{lma} \label{kork}
If  
\begin{equation} \label{korall}
(\partial_{\bar z}^\alpha\phi) R^{0}=0
\end{equation}
for all $\alpha\ge 0$,   then $\tilde R^0\Phi^z=0$ and 
$\phi$ is in $\E a^r$.
\end{lma}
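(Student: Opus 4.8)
The plan is to read off membership $\phi\in\E a^r$ directly from the representation \eqref{kloak}. Once we know that the residue term $S\phi$ in \eqref{kloak2} vanishes, we are left with $\phi=f_1 T_1\phi$, and since $T_1\phi$ is a smooth section of $E_1=\otimes_1^r A$ while $f_1=\delta_a\otimes\cdots\otimes\delta_a$ sends a frame element $e_{i_1}\otimes\cdots\otimes e_{i_r}$ to the product $a_{i_1}\cdots a_{i_r}$, the section $f_1 T_1\phi$ is a smooth combination of the generators of $a^r$; that is, $\phi=f_1 T_1\phi\in\E a^r$. So everything reduces to the claim $\tilde R^0\Phi^z=0$, which forces $S\phi\equiv 0$.

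To prove $\tilde R^0\Phi^z=0$ I would exploit that $\Phi^z=\tilde\phi-\dbar\tilde\phi\w v^z$ is assembled out of the antiholomorphic derivatives of $\phi$. By \eqref{deff} the extension $\tilde\phi$ is a series whose $\alpha$-th coefficient is $(\partial^\alpha_{\bar\zeta}\phi)(\zeta)$ times a form that is smooth in $(\zeta,\omega)$; applying $\dbar$ only raises the order of these derivatives, so $\dbar\tilde\phi=\sum_\beta(\partial^\beta_{\bar\zeta}\phi)(\zeta)\,\eta_\beta$ with smooth forms $\eta_\beta$ as well. Now $\tilde R^0=R^0\otimes 1$ is a current in the $\zeta$-variables alone, and multiplication of $R^0\otimes 1$ by a function of $\zeta$ factors as $\big((\partial^\beta_{\bar\zeta}\phi)R^0\big)\otimes 1$. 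The hypothesis \eqref{korall} says precisely that each such factor vanishes. Since the almost holomorphic extension converges in $C^\infty$, I may multiply $\tilde R^0$ into the series term by term and conclude $\tilde R^0\tilde\phi=0$ and $\tilde R^0\dbar\tilde\phi=0$.

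The one point that needs care — and the step I expect to be the main obstacle — is the passage from $\tilde R^0\dbar\tilde\phi=0$ to $\tilde R^0(\dbar\tilde\phi\w v^z)=0$, since $v^z$ is a genuine current and one may not freely wedge two currents. The cancellation that makes $\dbar\tilde\phi$ vanish to infinite order on $\{\omega=\bar\zeta\}$ is invisible term by term, so the individual pieces $\eta_\beta\w v^z$ need not be smooth. I would circumvent this exactly as in Lemma~\ref{bongo}: $\dbar\tilde\phi$ vanishes to infinite order on the diagonal, which contains the singular point of $v^z$, so $\dbar\tilde\phi\w v^z$ is in fact a smooth form. Multiplying $\tilde R^0$ into the smooth factor $\dbar\tilde\phi$ first — where term-by-term multiplication is legitimate and gives $0$ — and only afterwards pairing with $v^z$ (for instance by regularizing $v^z$ to a smooth $v^z_\epsilon$ and letting $\epsilon\to 0$, which is harmless since the limiting form is smooth) yields $\tilde R^0(\dbar\tilde\phi\w v^z)=0$. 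Together with $\tilde R^0\tilde\phi=0$ this gives $\tilde R^0\Phi^z=0$, hence $S\phi=0$, and $\phi\in\E a^r$ follows as in the first paragraph. This is in essence the argument of \cite[Theorem~1.1]{Aco}.
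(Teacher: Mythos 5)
Your proposal is correct and follows essentially the same route as the paper, which states the lemma as a direct consequence of \eqref{deff}, \eqref{kloak} and \eqref{kloak2} with the details delegated to \cite[Theorem~1.1]{Aco}: the hypothesis \eqref{korall} kills every term of the extension \eqref{deff} and hence $\tilde R^0\Phi^z=0$, so $S\phi=0$ and $\phi=f_1T_1\phi$ exhibits the membership in $\E a^r$. Your additional care about wedging with $v^z$ (term-by-term annihilation using the finite order of $\tilde R^0$, then regularization) is a legitimate filling-in of the details the paper leaves implicit, not a departure from its argument.
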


\begin{proof}[Proof of Theorem~\ref{main3}]
In view of Lemma~\ref{kork}
have to prove that 
\eqref{pontus} implies \eqref{korall}. 
Let as choose a modification $\pi\colon \Omega'\to \Omega$ as above.
For simplicity we skip the upper index $0$ and 
and write \eqref{skrov} as $R=\pi_* R'$,
where 
 \begin{equation}\label{pompa}
R'=\dbar\frac{1}{a_0^{\mu+r-1}}\w \omega
\end{equation}
and $\omega$ is a smooth form with values in $L^{\mu+r-1}$, where $L$ is the line bundle
defined by $a_0$. 
Let $\xi=\partial_{\bar z}^\alpha\phi$ for some $\alpha$. We must prove that $\xi R=0$.
Since $\xi$ is smooth, 
$$
\xi R=\pi_*(\pi^*\xi\cdot R').
$$
It is thus enough to verify that  $\pi^*\xi R'=0$.

Since $|a'|>0$, the assumption \eqref{pontus} implies that 
\begin{equation}\label{kokett}
|\pi^*\xi|\le C |a_0|^{\mu+r-1}
\end{equation}
for some constant $C$. 
In a \nbh $\V$ of a regular point on the zero set of $a_0$ we can assume that we have local coordinates
$s_1,\ldots, s_n$ such that $s_1^t=a_0$ for some positive integer $t$, or more correctly, $s_1^t$ is 
the representative of $a_0$ with respect to a local frame for $L$. In any case, in view of \eqref{kokett},
a Taylor expansion gives that $\pi^*\xi$ is a finite sum of terms 
$s_1^{t(\mu+r-1)-j}\bar s_1^j \nu$, where $\nu$ is smooth.  It is well-known that each such term
annihilates the residue current $\dbar(1/s_1^{t(\mu+r-1)})$.  In fact, already one single
factor $\bar s_1$ is enough. The "worst" case is thus $s_1^{t(\mu+r-1)}\nu$, and this factor
precisely annihilates $\dbar(1/s_1^{t(\mu+r-1)})$.
It follows that $\mu=\pi^*\xi \dbar(1/a_0^{\mu+r-1})$ has support on a set with codimension $\ge 2$. 
Since $\mu$ is pseudomeromorphic and has  bidegree $(0,1)$ it must therefore vanish identically in view of the 
dimension principle, see, e.g., \cite{ASS}. 

Alternatively we can assume that the modification is chosen so that 
$a_0$ locally is a monomial (normal crossings), say $a_0=s_1^{t_1}\cdots s_k^{t_k}$. 
Then \eqref{kokett} and a Taylor expansion reveals that $\pi^*\xi$ is a sum of terms
with a factor $\Ok(|s_1|^{t_1(\mu+r-1)}\cdots |s_k|^{t_k(\mu+r-1)})$, and each such term
annihilates $\dbar(1/ s_1^{t_1(\mu+r-1)}\cdots s_k^{t_k(\mu+r-1)})$.
\end{proof}

\def\listing#1#2#3{{\sc #1}:\ {\it #2},\ #3.}


\begin{thebibliography}{9999}






\bibitem{AC}
\listing{M.~Andersson \& H.~Carlsson}
{$H^p$-estimates of holomorphic division formulas}
{Pacific J. Math. {\bf 173},  (1996),  307--335}
  



\bibitem{A1}\listing{M.\ Andersson}
{Integral representation  with weights I}
{Math.\ Ann.\ {\bf 326},  (2003), 1--18}



\bibitem{A2}\listing{M.\ Andersson}
{Residue currents and ideals of holomorphic functions}
{Bull. Sci. Math. {\bf 128}, (2004), 481--512}
{}{}{}{}{}{}
{}


\bibitem{Aco}\listing{M.\ Andersson}
{Ideals of smooth functions and residue currents}
{J Funct. Anal. {\bf 212},  (2004), 76--88}



\bibitem{A3}\listing{M.\ Andersson}
{Integral representation with weights. II. Division and interpolation}
 {Math. Z.  {\bf 254},  (2006), 315--332} {}
 
 \bibitem{A4}\listing{M.\ Andersson}
{Explicit versions of the Brian\c con-Skoda theorem with variations}
{Michigan Math. J. {\bf 54}, (2006), 361--373}
 
 
 \bibitem{AW1}\listing{M.\ Andersson \& E.\ Wulcan}
{Residue currents with prescribed annihilator ideals}
{Ann. Sci. Ecole Norm. Sup. {\bf 40}, (2007), 985--1007}
%

\bibitem{ASS}
\listing{M.~Andersson \& H.~Samuelsson~Kalm \& J.~Sznajdman}
{On the Brian\c con-Skoda theorem on a singular variety}
{Ann. Inst. Fourier {\bf 60}, (2010), 417--432}
%

\bibitem{AG}\listing{M.\ Andersson \& E.\ G\"otmark}
{Explicit representation of membership in polynomial ideals}
{Math. Ann. {\bf 349}, (2011), 345--365}. 



  



 




\bibitem{Be}\listing{B.\ Berndtsson}
{A formula for division and interpolation}{Math Ann.\ {\bf 263} (1983)}
{}{}{}


\bibitem{BP} \listing{B.~Berndtsson \& M.~Passare}
{Integral formulas and an explicit version of the fundamental principle}
{J. Funct. Anal. {\bf 84},  (1989), 358--372} 




\bibitem{BP}\listing{B.\ Berndtsson \& M.~Passare}
{Integral formulas and an explicit version of the
fundamental principle}{J.\ Func.\ Analysis {\bf 84} (1989)}



\bibitem{BY}\listing{C.\ Berenstein \& A.\ Yger}
{Effective Bezout identities in $\mathbb Q[z_1, \ldots ,z_n]$}
{Acta Math. {\bf166} (1991), 6--120}




\bibitem{BGVY}\listing{C.\ A.\ Berenstein \& R.\ Gay \& A.\ Vidras \&
A.\ Yger}
{Residue currents and Bezout identities}
{Progress in Mathematics
{\bf 114} Birkh\"auser Yerlag (1993)}








\bibitem{BS}\listing{J.\  Briancon \& H.\ Skoda}
{Sur la cl\^oture int\'egrale d'un id\'eal de germes de fonctions holomorphes en un point de $\C\sp{n}$.}
{ C.\ R.\ Acad.\ Sci.\ Paris S\'er. A 278 (1974),
949--951}



\bibitem{Hun}\listing{C.\ Huneke}
{Uniform  bounds  in  Noetherian  rings}
{Invent. Math.,  {\bf 107}, (1992), 203--223}





\bibitem{Mal}\listing{B.\ Malgrange}{Ideals of Differentiable Functions}
{Oxford University Press and Tata Institute, Bombay, 1966}{}{}{}{}{}


\bibitem{MH}
\listing{S-J, Matsubara-Heo}  
{Residue current approach to Ehrenpreis-Malgrange type theorem for linear differential equations with constant coefficients and commensurate time lags}
{Adv. Math. {\bf 331},  (2018), 170--208} 


\bibitem{Maz}
\listing{E~Mazzilli}
{Formules de division dans $\C^n$}
{Michigan Math. J. {\bf 51},  (2003), 251--277}


 








\bibitem{Pa}\listing{M.\ Passare}
{Residues, currents, and their relation to ideals of holomorphic functions} 
{Math.\ Scand.\ {\bf 62} (1988), no. 1, 75--152.}


\bibitem{RSW}
\listing{J~Ruppenthal \& H~Samuelsson-Kalm \& E~Wulcan}
{Explicit Serre duality on complex spaces}  
{Adv. Math.  {\bf 305} (2017), 1320--1355}









\end{thebibliography}
\end{document}